\newtheorem{theorem}{Theorem}[section]
\theoremstyle{definition}
\newtheorem{definition}[theorem]{Definition}
\newtheorem{remark}[theorem]{Remark}
\newtheorem{conjecture/question}[theorem]{Conjecture/Question}
\newtheorem{remark/definition}[theorem]{Remark/Definition}
\newtheorem{terminology/notation}[theorem]{Terminology/Notation}
\def\PP{{\textbf P}}
\def\OO{\mathcal{O}}
\def\cS{\mathcal{S}}
\def\cM{\mathcal{M}}
\def\rr{\overline{\mathcal{R}}}
\def\cZ{\mathcal{Z}}
\def\Pic0{{\rm Pic}^0(X)}
\def\mm{\overline{\mathcal{M}}}
\def\ss{\overline{\mathcal{S}}}
\begin{document}
\title{The birational type of the moduli space of even spin curves}

\author[G. Farkas]{Gavril Farkas}

\address{Humboldt-Universit\"at zu Berlin, Institut F\"ur Mathematik,
10099 Berlin} \email{{\tt farkas@math.hu-berlin.de}}
\thanks{Research  partially supported by an Alfred P. Sloan Fellowship}
\maketitle

The moduli space $\cS_g$ of smooth spin curves parameterizes pairs
$[C, \eta]$, where $[C]\in \cM_g$ is a curve of genus $g$ and
$\eta\in \mbox{Pic}^{g-1}(C)$ is a theta-characteristic. The finite
forgetful map $\pi: \cS_g \rightarrow \cM_g$ has degree $2^{2g}$ and
$\cS_g$ is a disjoint union of two connected components $\cS_g^{+}$
and $\cS_g^{-}$ of relative degrees $2^{g-1}(2^g+1)$ and
$2^{g-1}(2^g-1)$ corresponding to even and odd theta-characteristics
respectively. A compactification $\ss_g$ of $\cS_g$ over $\mm_g$ is
obtained by considering the coarse moduli space of the stack of
stable spin curves of genus $g$ (cf. \cite {C}, \cite{CCC} and
\cite{AJ}). The projection $\cS_g\rightarrow \cM_g$ extends to a
finite branched covering $\pi:\ss_g\rightarrow \mm_g$. In this paper
we determine the Kodaira dimension of $\ss_g^+$:

\begin{theorem}\label{kodaira}
The moduli space $\ss_g^{+}$ of even spin curves is a variety of general type
for $g>8$ and it is uniruled for $g<8$. The Kodaira dimension of $\ss_8^{+}$ is non-negative  \footnote{Building on the results of this paper, we have proved quite recently in joint work with A. Verra, that
$\kappa(\ss_8^+)=0$. Details will appear later.}.
\end{theorem}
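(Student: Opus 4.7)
The plan is to adapt the Harris-Mumford-Farkas strategy: compute $K_{\ss_g^+}$ explicitly, exhibit an effective divisor $\overline{\cZ}$ of Brill-Noether type such that a positive rational combination of $\overline{\cZ}$ with the Hodge and boundary classes dominates $K_{\ss_g^+}$ for $g>8$, and verify that singularities of $\ss_g^+$ impose no adjunction conditions. For $g<8$ the strategy is reversed: one produces covering families of rational curves, while the borderline case $g=8$ falls out of the same numerical framework. The first step is to compute $K_{\ss_g^+}$ by applying Riemann-Hurwitz to the finite forgetful map $\pi:\ss_g^+\to\mm_g$ using Cornalba's stable spin compactification. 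Each boundary divisor $\delta_i$ of $\mm_g$ lifts to several components of $\partial\ss_g^+$ indexed by the possible restrictions of $\eta$ to the components of the dual graph, and the local structure of $\ss_g^+$ along these components controls the ramification of $\pi$, yielding an explicit formula for $K_{\ss_g^+}$ in terms of the pulled-back Hodge class $\lambda$ and the boundary divisors of $\ss_g^+$.

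Next I would construct $\overline{\cZ}$ as the closure in $\ss_g^+$ of the locus of pairs $[C,\eta]\in\cS_g^+$ admitting a pencil $A\in W^1_k(C)$, with $k$ chosen so that $\rho(g,1,k)=0$, for which $\eta\otimes A$ has strictly more sections than the generic count. Equivalently, $\overline{\cZ}$ is the degeneracy locus of a natural morphism of vector bundles on the Hurwitz-type auxiliary space $\AUX$ parameterizing triples $(C,\eta,A)$, and its class in $\mathrm{Pic}(\ss_g^+)_{\mathbb Q}$ is the pushforward of a Porteous-type Chern class expression from a compactification of $\AUX$. This computation, and especially the determination of the coefficient of each boundary divisor in the resulting formula, is the step I expect to be the main obstacle: it requires a detailed analysis of limit linear series compatible with a spin structure on every boundary stratum, with subtle cancellations between the different ways a degenerating spin curve can acquire an unexpected pencil.

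With the classes of $K_{\ss_g^+}$ and $\overline{\cZ}$ in hand, bigness for $g>8$ becomes an elementary numerical check: the critical slope, and hence the threshold $g=8$, is determined by the ratio of the Hodge and boundary coefficients in the class of $\overline{\cZ}$. In parallel, a Reid-Tai-style analysis modeled on the Harris-Mumford treatment of $\mm_g$ and its spin-curve refinements ensures that the non-canonical singularities of $\ss_g^+$ do not obstruct the extension of pluricanonical forms from the smooth locus; the delicate points to check are the elliptic-tail and nodal automorphism strata specific to the spin setting. For $g<8$ I would establish uniruledness by covering $\ss_g^+$ with rational curves obtained from the unirational parametrizations of $\cM_g$ available in that range, combined with explicit one-parameter families of theta characteristics on fixed curves, and for $g=8$ the same numerical framework renders $K_{\ss_8^+}$ $\mathbb Q$-effective though not big, yielding $\kappa(\ss_8^+)\ge 0$.
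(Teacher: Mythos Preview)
Your overall architecture matches the paper's, but the two key technical choices diverge, and one of them contains a real gap.

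\textbf{The effective divisor.} The paper does not use a single Brill--Noether-type divisor on $\ss_g^+$. Instead it takes the closure of $\Theta_{\mathrm{null}}=\{[C,\eta]:h^0(C,\eta)>0\}$ and computes its class by elementary test curves, obtaining
\[
\overline{\Theta}_{\mathrm{null}}\equiv \tfrac{1}{4}\lambda-\tfrac{1}{16}\alpha_0-\tfrac{1}{2}\sum_{i\ge 1}\beta_i,
\]
with vanishing $\beta_0$ and $\alpha_i$ coefficients. This is then combined with the \emph{pull-back} $\pi^*(D)$ of an already-known divisor on $\mm_g$ (Brill--Noether, Gieseker--Petri, or the $K3$ divisor). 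The point is that $K_{\ss_g^+}$ has coefficients $-2\alpha_0-3\beta_0$, whereas any class pulled back from $\mm_g$ has them in ratio $1:2$; one needs a genuinely ``spin'' divisor to break this proportionality, and $\overline{\Theta}_{\mathrm{null}}$ (with $\beta_0$-coefficient $0$) does exactly that. Your locus $\overline{\cZ}$ is, via the base-point-free pencil trick, set-theoretically equal to $\Theta_{\mathrm{null}}$ for even $g$ (the paper makes this remark explicitly), so you are in effect rediscovering the same divisor through a much heavier determinantal computation---and your single divisor will still not suffice without the $\pi^*(D)$ contribution. The paper's route sidesteps the Porteous/limit-linear-series analysis you flag as the main obstacle by reducing to three short test-curve calculations.

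\textbf{Uniruledness for $g<8$.} Here there is a genuine gap. You propose to cover $\ss_g^+$ by rational curves built from unirational parametrizations of $\cM_g$ together with ``one-parameter families of theta characteristics on fixed curves''. But theta-characteristics on a fixed smooth curve form a finite set, so no such one-parameter family exists; and a rational curve in $\cM_g$ need not lift to a rational curve in $\ss_g^+$, since the monodromy of $\pi$ over it can be nontrivial. The paper avoids this entirely: it lifts a Lefschetz pencil on a $K3$ surface to a covering curve $R\subset\ss_g^+$, computes $R\cdot K_{\ss_g^+}<0$ for $g\le 7$, concludes that $K_{\ss_g^+}$ is not pseudo-effective, and invokes \cite{BDPP} to obtain uniruledness. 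No rationality of $R$ is needed. For $g=8$ and the singularity analysis your outline is adequate; the paper cites Ludwig for the latter.
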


 It was
classically known that $\ss_2^+$ is rational. The Scorza map
establishes a birational isomorphism between $\ss_3^+$ and $\mm_3$,
 cf. \cite{DK}, hence $\ss_3^+$ is rational. Very recently, Takagi and Zucconi \cite{TZ} showed that $\ss_4^+$ is rational as well.  Theorem \ref{kodaira} can be compared to \cite{FL}
Theorem 0.3: The moduli space $\rr_g$ of Prym varieties of dimension
$g-1$ (that is, non-trivial square roots of $\OO_C$ for each $[C]\in \cM_g$) is
of general type when $g>13$ and $g\neq 15$. On the other hand
$\rr_g$ is unirational for $g< 8$. Surprisingly, the problem of
determining the Kodaira dimension has a much shorter solution for
$\ss_g^{+}$ than for $\rr_g$ and our results are complete.

We describe the strategy to prove that $\ss_g^{+}$ is of general
type for a given $g$. We denote by $\lambda=\pi^*(\lambda)\in
\mbox{Pic}(\ss_g^{+})$ the pull-back of the Hodge class and by
$\alpha_0, \beta_0 \in \mbox{Pic}(\ss_g^{+})$ and $\alpha_{i},
\beta_{i} \in \mbox{Pic}(\ss_g^{+})$ for $1\leq i\leq [g/2]$
boundary divisor classes such that
$$\pi^*(\delta_0)=\alpha_0+2 \beta_0
\\ \mbox{ and } \
\pi^*(\delta_i)=\alpha_i+\beta_i\ \mbox{ for } 1\leq i\leq [g/2]$$
(see Section 2 for precise definitions). Using Riemann-Hurwitz  and
\cite{HM} we find that
$$K_{\ss_g^{+}}\equiv\pi^*(K_{\mm_g})+\beta_0 \equiv
13\lambda-2\alpha_0-3\beta_0-2\sum_{i=1}^{[g/2]}
(\alpha_i+\beta_i)-(\alpha_1+\beta_1).$$ We prove that $K_{\ss_g^+}$
is a big $\mathbb Q$-divisor class by comparing it against the class
of the closure in $\ss_g^+$ of the divisor $\Theta_{\mathrm{null}}$
on $\cS_g^{+}$ of non-vanishing even theta characteristics:
\begin{theorem}\label{thetanull}
The closure in $\ss_g^{+}$ of the divisor
$\Theta_{\mathrm{null}}:=\{[C, \eta]\in \cS_g^{+}: H^0(C, \eta)\neq
0\}$ of non-vanishing even theta characteristics has class equal to
$$\overline{\Theta}_{\mathrm{null}}\equiv
\frac{1}{4}\lambda-\frac{1}{16}\alpha_0-\frac{1}{2}\sum_{i=1}^{[g/2]}
 \beta_i\in \   \mathrm{Pic}(\ss_g^{+}).$$
\end{theorem}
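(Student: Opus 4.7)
My plan is to realize $\overline{\Theta}_{\mathrm{null}}$ as (a rational multiple of) the first Chern class of a derived pushforward on the universal spin curve, and to compute that pushforward via Grothendieck--Riemann--Roch. Let $f\colon\widetilde{\cC}\to\ss_g^+$ denote the universal stable spin curve (in the sense of \cite{C,CCC,AJ}), carrying the universal theta characteristic $\eta$ with $\eta^{\otimes 2}\cong\omega_f$ away from the exceptional components, and $\eta|_E\cong\OO_E(1)$ on each exceptional $\mathbb{P}^1$-component $E\subset\widetilde{\cC}$ lying above $\beta_0$. Since for an even theta characteristic $\eta$ on a smooth curve $C$ one has $h^0(C,\eta)=h^1(C,\eta)$ and these both vanish generically, the sheaves $f_*\eta$ and $R^1f_*\eta$ are torsion, supported precisely on $\overline{\Theta}_{\mathrm{null}}$, and $c_1(Rf_*\eta)\in\mathrm{Pic}(\ss_g^+)$ is a rational multiple of $[\overline{\Theta}_{\mathrm{null}}]$.

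The central calculation is the Riemann--Roch identity
\[
ch(Rf_*\eta) \;=\; f_*\bigl(ch(\eta)\cdot td(T_f)\bigr).
\]
Because $2c_1(\eta)=c_1(\omega_f)$, the degree-one part of the integrand vanishes and the degree-two part collapses to $-c_1(\omega_f)^2/24 + [\mathrm{Sing}(f)]/12$. Pushing forward via Mumford's relations (applied on the modified universal spin curve $\widetilde{\cC}$, not on the pullback of the universal curve from $\mm_g$) and expanding $\pi^*\delta_0=\alpha_0+2\beta_0$, $\pi^*\delta_i=\alpha_i+\beta_i$, one obtains an explicit expression for $c_1(Rf_*\eta)$ in terms of $\lambda$ and the boundary divisors. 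The overall scalar relating $c_1(Rf_*\eta)$ to $[\overline{\Theta}_{\mathrm{null}}]$ is then fixed by a local analysis at a generic theta-null: the condition $h^0(\eta)\geq 2$ is the corank-two locus of a \emph{symmetric} vector-bundle map induced by $\eta\otimes\eta\cong\omega$, whose Porteous/Pfaffian class carries the intrinsic factor accounting for the $\tfrac14$ on $\lambda$.

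For the boundary divisors $\alpha_i,\beta_i$ with $i\geq1$, I would verify the vanishing of the $\alpha_i$-coefficient and the value $-\tfrac12$ on $\beta_i$ by test families. For a general $[D]\in\mm_{g-i}$ and a Lefschetz pencil $\{C_t\}\subset\mm_i$, the family $\{C_t\cup_y D\}_t$ in $\mm_g$ lifts to a disjoint union of test curves in $\ss_g^+$ indexed by the even spin structures on the reducible nodal curve $C_t\cup_y D$: these are products of theta characteristics on $C_t$ and $D$ with the parity condition and the relevant gluing data at $y$. Counting theta-null degenerations along each lifted family and comparing with the known class of the vanishing theta-null divisor on $\mm_i$ fixes the $\alpha_i,\beta_i$-coefficients simultaneously.

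The principal obstacle is the boundary analysis above $\beta_0$: there the universal stable spin curve acquires an exceptional $\mathbb{P}^1$-component $E$ on which $\eta|_E=\OO_E(1)$, so the identity $\eta^{\otimes 2}\cong\omega_f$ only holds after twisting $\omega_f$ by the class of $E$. Propagating this correction through Grothendieck--Riemann--Roch, and keeping careful track of the difference between $\omega_f$ on $\widetilde{\cC}$ and the naive pullback of the relative dualizing sheaf from the universal curve over $\mm_g$, is what produces the characteristic asymmetry between the coefficients $-\tfrac{1}{16}\alpha_0$ and the absent $\beta_0$-term, and is where the essential spin-structure geometry really enters the computation.
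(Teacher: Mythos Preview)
Your approach differs substantially from the paper's. The paper uses no Grothendieck--Riemann--Roch: it writes $\overline{\Theta}_{\mathrm{null}}\equiv\bar\lambda\,\lambda-\bar\alpha_0\alpha_0-\bar\beta_0\beta_0-\sum_i(\bar\alpha_i\alpha_i+\bar\beta_i\beta_i)$ with undetermined coefficients and evaluates each by intersecting with explicit one-parameter families. For $\bar\alpha_i,\bar\beta_i$ ($i\ge 1$) it fixes general $[C]\in\cM_i$, $[D,q]\in\cM_{g-i,1}$ together with theta-characteristics on each, and lets the attaching point $y\in C$ vary; the resulting curves $F_i\subset A_i$ and $G_i\subset B_i$ each meet only a \emph{single} boundary class, and a limit-linear-series count gives $F_i\cdot\overline{\Theta}_{\mathrm{null}}=0$, $G_i\cdot\overline{\Theta}_{\mathrm{null}}=i-1$. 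For $\bar\lambda,\bar\alpha_0,\bar\beta_0$ it uses three further pencils $F_0,G_0,H_0$ (two lifts of the standard elliptic-tail pencil, one curve entirely inside $B_0$), all of which miss $\overline{\Theta}_{\mathrm{null}}$ by genericity, giving three linear relations that solve to $\bar\lambda=1/4$, $\bar\alpha_0=1/16$, $\bar\beta_0=0$.

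Your GRR route is viable and more conceptual for the $\lambda$ and $\alpha_0$ coefficients: once one identifies $\overline{\Theta}_{\mathrm{null}}$ with the Pfaffian divisor of the self-duality $Rf_*\eta\simeq(Rf_*\eta)^\vee[-1]$, the relation $2[\overline{\Theta}_{\mathrm{null}}]=-c_1(Rf_*\eta)$ plus your Todd computation does the job. Two gaps to close. First, exceptional components appear not only over $B_0$ but over \emph{every} $A_i$ and $B_i$ (the quasi-stable model of $C\cup_y D$ always inserts an exceptional $\mathbb P^1$), so the twist $\eta^{\otimes 2}\cong\omega_f(-\sum E)$ and the attendant $E^2$ and node corrections must be carried through GRR uniformly across the entire boundary; this is the bulk of the computation and is what produces the distinction $\bar\alpha_i=0$ versus $\bar\beta_i=1/2$, which you have not explained. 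Second, your proposed Lefschetz-pencil families in $\cM_i$ are less efficient than the paper's: their nodal fibres force them to meet $\alpha_0$ and $\beta_0$ as well as $\alpha_i$ or $\beta_i$, so they do not isolate individual coefficients and you would need $\bar\lambda,\bar\alpha_0,\bar\beta_0$ as input. With these points addressed your outline would succeed, but the paper's test curves are chosen precisely so that each meets as few generators of $\mathrm{Pic}(\ss_g^+)$ as possible, which is what makes its argument short.
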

Note that the coefficients of $\beta_0$ and $\alpha_i$ for $1\leq
i\leq [g/2]$ in the expansion of $[\overline{\Theta}_{\mathrm{null}}]$ are
equal to $0$. To prove Theorem
\ref{thetanull}, one can use test curves on $\ss_g^+$  or alternatively, realize $\overline{\Theta}_{\mathrm{null}}$ as the push-forward of
the degeneracy locus of a map of vector bundles of the same rank
defined over a certain Hurwitz scheme covering $\ss_g^+$ and  use \cite{F1} and \cite{F2} to compute the class of this locus. Then we use \cite{FP} Theorem 1.1, to construct for each genus
$3\leq g\leq 22$ an effective divisor class $D\equiv
a\lambda-\sum_{i=0}^{[g/2]} b_i\delta_i\in \mbox{Eff}(\mm_g)$ with
coefficients satisfying the inequalities
\[
\frac{a}{b_0}\leq \begin{cases}
6+\frac{12}{g+1}, & \text{if } g+1 \text{ is composite}\\
7, & \text{if }g=10\\
\frac{6k^2+k-6}{k(k-1)}, & \text{if } g=2k-2\geq 4
\end{cases}
\]
and $b_i/b_0 \geq 4/3$ for $1\leq i\leq [g/2]$. When $g+1$ is
composite we choose for $D$ the closure of the Brill-Noether divisor
of curves with a $\mathfrak g^r_d$, that is, $\cM_{g, d}^r:=\{[C]\in
\cM_g: G^r_d(C)\neq \emptyset\}$ in case when the Brill-Noether
number $\rho(g, r, d)=-1$, and then cf. \cite{EH2}
$$\mm_{g, d}^r\equiv c_{g, d,
r}\Bigl((g+3)\lambda-\frac{g+1}{6}\delta_0-\sum_{i=1}^{[g/2]} i(g-i)
\delta_i\Bigr)\in \mathrm{Pic}(\mm_g).$$ For $g=10$ we take the
closure of the divisor $\mathcal{K}_{10}:=\{[C]\in \cM_{10}: C
\mbox{ lies on a } K3 \mbox{ surface}\}$ (cf. \cite{FP} Theorem
1.6). In the remaining cases, when necessarily $g=2k-2$, we choose
for $D$ the Gieseker-Petri divisor $\overline{\mathcal{GP}}_{g,
k}^1$ consisting of those curves $[C]\in \cM_g$ such that  there
exists a pencil $A\in W^1_k(C)$ such that the multiplication map $$\mu_0(A): H^0(C, A)\otimes H^0(C,
K_C\otimes A^{\vee})\rightarrow H^0(C, K_C)$$ is not an isomorphism, see
\cite{EH2}, \cite{F2}. Having chosen $D$, we form the $\mathbb
Q$-linear combination of divisor classes
$$8\cdot \overline{\Theta}_{\mathrm{null}}+\frac{3}{2b_0}\cdot
\pi^*(D)=\bigl(2+\frac{3a}{2b_0}\bigr)\lambda-2\alpha_0-3\beta_0-\sum_{i=1}^{[g/2]}
\frac{3b_i}{2b_0} \alpha_i-\sum_{i=1}^{[g/2]}
\bigl(4+\frac{3b_i}{2}\bigr)\beta_i\in \mathrm{Pic}(\ss_g^+),$$ from
which we can write $$K_{\ss_g^+}=\nu_g\cdot
\lambda+8\overline{\Theta}_{\mathrm{null}}+\frac{3}{2b_0}\pi^*(D)+\sum_{i=1}^{[g/2]}
\bigl(c_i\cdot \alpha_i+ c_i'\cdot \beta_i),$$ where $c_i, c_i'\geq 0$.
Moreover $\nu_g>0$ precisely when $g\geq 9$, while $\nu_8=0$. Since
the class $\lambda\in \mbox{Pic}(\ss_g^+)$ is big and nef, we obtain
that  $K_{\ss_g^+}$ is a big $\mathbb Q$-divisor class on the normal
variety $\ss_g^+$ as soon as $g>8$. It is proved in \cite{lu2007}
that for $g\geq 4$ pluricanonical forms defined on $\ss_{g,
\mathrm{reg}}^+$ extend to any resolution of singularities
$\widehat{\cS_g^+}\rightarrow \ss_g^+$, which shows that $\ss_g^+$
is of general type whenever $\nu_g>0$ and completes the proof of
Theorem \ref{kodaira} for $g\geq 8$. When $g\leq 7$ we show that
$K_{\ss_g^+}\notin \overline{\mbox{Eff}}(\ss_g^+)$ by constructing a covering
curve $R\subset \ss_g^+$ such that $R\cdot K_{\ss_g^+}<0$, cf.
Theorem \ref{negative}. We then use \cite{BDPP} to conclude that $\ss_g^+$ is uniruled.

I would like to thank the referee for pertinent comments which led to a clearly improved
version of this paper.

\section{The stack of spin curves}
We review a few facts about Cornalba's compactification
$\pi:\ss_g\rightarrow \mm_g$, see \cite{C}. If $X$ is a nodal curve,
a smooth rational component $E\subset X$ is said to be
\emph{exceptional} if $\#(E\cap \overline{X-E})=2$. The curve $X$ is
said to be \emph{quasi-stable} if $\#(E\cap \overline{X-E})\geq 2$ for any smooth rational component $E\subset X$, and moreover any two exceptional components of
$X$ are disjoint. A quasi-stable curve is obtained from a stable
curve by blowing-up each node at most once. We denote by $[st(X)]\in
\mm_g$ the stable model of $X$.

\begin{definition}\label{spinstructures} A \emph{spin curve} of genus
$g$ consists of a triple $(X, \eta, \beta)$, where $X$ is a genus
$g$ quasi-stable curve, $\eta\in \mathrm{Pic}^{g-1}(X)$ is a line
bundle of degree $g-1$ such that $\eta_{E}=\OO_E(1)$ for every
exceptional component $E\subset X$, and $\beta:\eta^{\otimes
2}\rightarrow \omega_X$ is a sheaf homomorphism which is generically
non-zero along each non-exceptional component of $X$.
\newline
A \emph{family of spin curves} over a base scheme $S$ consists of a
triple $(\mathcal{X}\stackrel{f}\rightarrow S, \eta, \beta)$, where
$f:\mathcal{X}\rightarrow S$ is a flat family of quasi-stable
curves, $\eta\in \mathrm{Pic}(\mathcal{X})$ is a line bundle and
$\beta:\eta^{\otimes 2}\rightarrow \omega_{\mathcal{X}}$ is a sheaf
homomorphism, such that for every point $s\in S$ the restriction
$(X_s, \eta_{X_s}, \beta_{X_s}:\eta_{X_s}^{\otimes 2}\rightarrow
\omega_{X_s})$ is a spin curve.
\end{definition}

To describe locally the map $\pi:\ss_g\rightarrow \mm_g$ we follow
\cite{C} Section 5. We fix $[X, \eta, \beta]\in \ss_g$ and set
$C:=st(X)$. We denote by $E_1, \ldots, E_r$ the exceptional
components of $X$ and by $p_1, \ldots, p_r\in C_{\mathrm{sing}}$ the
nodes which are images of exceptional components. The automorphism
group of $(X, \eta, \beta)$ fits in the exact sequence of groups
$$1\longrightarrow \mbox{Aut}_0(X, \eta, \beta)\longrightarrow
\mbox{Aut}(X, \eta, \beta)\stackrel{\mathrm{res}_C}\longrightarrow
\mbox{Aut}(C).$$ We denote by $\mathbb C_{\tau}^{3g-3}$ the versal
deformation space of $(X, \eta, \beta)$ where for $1\leq i\leq r$
the locus $(\tau_i=0)\subset \mathbb C_{\tau}^{3g-3}$ corresponds to
spin curves in which the component $E_i\subset X$ persists.
Similarly, we denote by $\mathbb C_t^{3g-3}=\mbox{Ext}^1(\Omega_C,
\OO_C)$ the versal deformation space of $C$ and denote by
$(t_i=0)\subset \mathbb C_t^{3g-3}$ the locus where the node $p_i\in
C$ is not smoothed. Then around the point $[X, \eta, \beta]$, the
morphism $\pi:\ss_g\rightarrow \mm_g$ is locally given by the map
\begin{equation}\label{local}\frac{\mathbb C_{\tau}^{3g-3}}{\mbox{Aut}(X, \eta,
\beta)}\rightarrow \frac{\mathbb C_t^{3g-3}}{\mbox{Aut}(C)}, \
\mbox{  } \mbox{  } t_i=\tau_i^2 \mbox{ } \ (1\leq i\leq r)\ \mbox{
and  }\mbox{ } t_i=\tau_i \mbox{  } \ (r+1\leq i\leq 3g-3).
\end{equation} From now on we specialize to the case of even spin
curves and describe the boundary of $\ss_g^+$. In the process we
determine the ramification of the finite covering $\pi:
\ss_g^+\rightarrow \mm_g$.

\subsection{The boundary divisors of $\ss_g^+$}\hfill

\vskip 3pt
 If $[X, \eta, \beta]\in \pi^{-1}([C\cup_y D])$
where $[C, y]\in \cM_{i, 1}$ and $[D, y]\in \cM_{g-i, 1}$, then necessarily
$X:=C\cup_{y_1} E\cup_{y_2} D$, where $E$ is an exceptional
component such that $C\cap E=\{y_1\}$ and $D\cap E=\{y_2\}$.
Moreover $$\eta=\bigl(\eta_C, \eta_D, \eta_E=\OO_E(1)\bigr)\in
\mbox{Pic}^{g-1}(X),$$ where $\eta_C^{\otimes 2}=K_C, \eta_D^{\otimes
2}=K_D$. The condition $h^0(X, \eta)\equiv 0 \mbox{ mod } 2$,
implies that the theta-characteristics $\eta_C$ and $\eta_D$ have
the same parity. We denote by $A_i\subset \ss_g^+$ the closure of
the locus corresponding to pairs $([C, y, \eta_C], [D, y, \eta_D])\in
\cS_{i, 1}^+\times \cS_{g-i, 1}^+$ and by $B_i\subset \ss_g^+$ the
closure of the locus corresponding to pairs $([C, y, \eta_C], [D,
y, \eta_D])\in \cS_{i, 1}^-\times \cS_{g-i, 1}^{-}$.
\newline

For a general point $[X, \eta, \beta]\in A_i\cup B_i$  we have that
$\mbox{Aut}_0(X, \eta, \beta)=\mbox{Aut}(X, \eta, \beta)=\mathbb
Z_2$. Using (\ref{local}), the map $\mathbb
C_{\tau}^{3g-3}\rightarrow \mathbb C_t^{3g-3}$ is given by
$t_1=\tau_1^2$ and $t_i=\tau_i$ for $i\geq 2$. Furthermore,
$\mbox{Aut}_0(X, \eta, \beta)$ acts on $\mathbb C_{\tau}^{3g-3}$ via
$(\tau_1, \tau_2, \ldots, \tau_{3g-3})\mapsto (-\tau_1, \tau_2,
\ldots, \tau_{3g-3})$. It follows that $\Delta_i\subset \mm_g$ is
not a branch divisor for $\pi:\ss_g^+\rightarrow \mm_g$ and if
$\alpha_i=[A_i]\in \mathrm{Pic}(\ss_g^+)$ and $\beta_i=[B_i]\in
\mathrm{Pic}(\ss_g^+)$, then for $1\leq i\leq [g/2]$ we have the
relation \begin{equation}\pi^*(\delta_i)=\alpha_i+\beta_i.
\end{equation} Moreover,
$\pi_*(\alpha_i)=2^{g-2}(2^i+1)(2^{g-i}+1)\delta_i$ and
$\pi_*(\beta_i)=2^{g-2}(2^i-1)(2^{g-i}-1)\delta_i$.

For a point $[X, \eta, \beta]$ such that $st(X)=C_{yq}:=C/y\sim q$,
with $[C, y, q]\in \cM_{g-1, 2}$, there are two possibilities
depending on whether $X$ possesses an exceptional component or not.
If $X=C_{yq}$ and $\eta_C:=\nu^*(\eta)$ where $\nu:C\rightarrow X$
denotes the normalization map, then $\eta_C^{\otimes 2}=K_C(y+q)$.
For each choice of $\eta_C\in \mathrm{Pic}^{g-1}(C)$ as above, there
is precisely one choice of gluing the fibres $\eta_C(y)$ and
$\eta_C(q)$ such that $h^0(X, \eta)\equiv 0 \mbox{ mod } 2$. We denote by $A_0$ the
closure in $\ss_g^+$ of the locus of points $[C_{yq}, \eta_C\in
\sqrt{K_C(y+q)}]$ as above and clearly
$\mbox{deg}(A_0/\Delta_0)=2^{2g-2}$.

If $X=C\cup_{\{y, q\}} E$ where $E$ is an exceptional component,
then $\eta_C:=\eta\otimes \OO_C$ is a theta-characteristic on $C$.
Since $H^0(X, \omega)\cong H^0(C, \omega_C)$, it follows that $[C,
\eta_C]\in \cS_{g-1}^{+}$. For $[C, y, q]\in \cM_{g-1, 2}$
sufficiently generic we have that $\mbox{Aut}(X, \eta,
\beta)=\mbox{Aut}(C)=\{\mbox{Id}_C\}$, and then from (\ref{local}) it
follows that $\pi$ is simply branched over such points. We denote by
$B_0\subset \ss_g^+$ the closure of the locus of points
$[C\cup_{\{y, q\}} E, \eta_C\in \sqrt{K_C}, \eta_E=\OO_E(1)]$. If
$\alpha_0=[A_0]\in \mbox{Pic}(\ss_g^+)$ and $\beta_0=[B_0]\in
\mbox{Pic}(\ss_g^+)$,  we then have the relation
\begin{equation}\label{del0}
\pi^*(\delta_0)=\alpha_0+2\beta_0.
\end{equation} Note that $\pi_*(\alpha_0)=2^{2g-2}\delta_0$  and
$\pi_*(\beta_0)=2^{g-2}(2^{g-1}+1)\delta_0$.

\subsection{The uniruledness of $\ss_g^+$ for small $g$.}\hfill

\vskip2pt

We employ a simple negativity argument to determine $\kappa(\ss_g^+)$
for small genus. Using an analogous idea we showed that similarly, for
the moduli space of Prym curves, one has that $\kappa(\rr_g)=-\infty$ for
$g<8$, cf. \cite{FL} Theorem 0.7.
\begin{theorem}\label{negative}
 For $g<8$, the space $\ss_g^+$ is uniruled.
\end{theorem}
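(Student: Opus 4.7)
The strategy is to apply the Boucksom--Demailly--Paun--Peternell criterion \cite{BDPP}: it suffices to exhibit a movable 1-cycle $\widetilde{B}$ on $\ss_g^+$ with $\widetilde{B}\cdot K_{\ss_g^+}<0$, for then $K_{\ss_g^+}$ lies outside the pseudoeffective cone and $\ss_g^+$ is uniruled. As test curve in $\mm_g$ I would take a Lefschetz pencil $B$ inside a linear system $|L|$ on a general primitively polarised $K3$ surface $(S,L)$ of genus $g$, obtained by blowing up the $2g-2$ base points. For $g\le 9$---in particular for all $g<8$---such pencils sweep out $\mm_g$ as $(S,L)$ varies, so $B$ is a covering curve in $\mm_g$. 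The standard K3-pencil intersection numbers are
$$B\cdot\lambda=g+1,\quad B\cdot\delta_0=6(g+3),\quad B\cdot\delta_i=0 \ \text{for}\ 1\le i\le [g/2],$$
so that $B\cdot K_{\mm_g}=13(g+1)-12(g+3)=g-23$.

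I would then lift $B$ to $\ss_g^+$ set-theoretically, taking $\widetilde{B}:=\pi^{-1}(B)$; because $B$ moves to cover $\mm_g$ and $\pi$ is surjective, the class $[\widetilde{B}]$ is movable in $\ss_g^+$. Using $K_{\ss_g^+}=\pi^*K_{\mm_g}+\beta_0$, the projection formula, and the identity $\pi_*\beta_0=2^{g-2}(2^{g-1}+1)\delta_0$ recorded in Section~1,
\begin{align*}
\widetilde{B}\cdot K_{\ss_g^+}
&=\deg(\pi)\,(B\cdot K_{\mm_g})+B\cdot\pi_*\beta_0\\
&=2^{g-1}(2^g+1)(g-23)+6(g+3)\cdot 2^{g-2}(2^{g-1}+1)\\
&=2^{g-2}\bigl[\,2^g(5g-37)+4(2g-7)\,\bigr].
\end{align*}

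The remaining step is a direct numerical check that the bracket is strictly negative for $g\le 7$ and becomes positive at $g=8$: e.g.\ at $g=7$ it equals $128\cdot(-2)+28=-228$, and it is even more negative for $2\le g\le 6$, while at $g=8$ it equals $768+36>0$. Thus $\widetilde{B}\cdot K_{\ss_g^+}<0$ precisely when $g<8$, and \cite{BDPP} immediately yields the uniruledness. The only delicate ingredient is this sign computation; the rest is formal from the projection formula together with the finite-cover data collected in Section~1. It is satisfying (and consistent with the first part of Theorem~\ref{kodaira}) that the threshold provided by the K3-pencil test class falls exactly at $g=8$, where $\nu_g$ switches sign in the opposite direction.
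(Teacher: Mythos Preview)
Your proof is correct and follows essentially the same approach as the paper: both lift a Lefschetz pencil on a $K3$ surface to $\ss_g^+$ via $\pi$ and check that the resulting covering curve has negative intersection with $K_{\ss_g^+}$, invoking \cite{BDPP}. The only cosmetic difference is that you compute $\widetilde{B}\cdot K_{\ss_g^+}$ via the decomposition $K_{\ss_g^+}=\pi^*K_{\mm_g}+\beta_0$ and the projection formula, whereas the paper first records $R\cdot\lambda$, $R\cdot\alpha_0$, $R\cdot\beta_0$ separately and then pairs with the explicit expansion of $K_{\ss_g^+}$; the resulting numerical expression $2^{g-2}\bigl(2^g(5g-37)+4(2g-7)\bigr)$ is identical.
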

\begin{proof}  We start with a fixed $K3$ surface $S$ carrying a Lefschetz pencil of curves
of genus $g$. This induces a fibration
$f:\mathrm{Bl}_{g^2}(S)\rightarrow \PP^1$ and then we set
$B:=\bigl(m_{f}\bigr)_
*(\PP^1)\subset  \mm_g$, where $m_f:\PP^1\rightarrow \mm_g$ is the moduli map $m_f(t):=[f^{-1}(t)]$.
We have the following well-known formulas
on $\mm_g$ (cf. \cite{FP} Lemma 2.4):
$$B\cdot \lambda=g+1,\ B\cdot \delta_0=6g+18,\ \  \mbox{ and } B\cdot \delta_i=0\ \mbox{ for } i\geq 1.$$
We lift $B$ to a pencil $R\subset \ss_g^+$ of spin curves by taking
$$R:=B\times _{\mm_g} \ss_g^+=\{[C_t, \ \eta_{C_{t}}]\in \ss_g^{+}:
[C_{t}]\in B, \eta_{C_{t}}\in \overline{\mbox{Pic}}^{g-1}(C_{t}), t
\in \PP^1 \}\subset \ss_g^+.$$ Using (\ref{del0}) one computes the
intersection numbers with the generators of $\mbox{Pic}(\ss_g^+)$:
$$R\cdot \lambda =(g+1)2^{g-1}(2^g+1), \ R\cdot
\alpha_0=(6g+18)2^{2g-2}\ \mbox{ and } R\cdot
\beta_0=(6g+18)2^{g-2}(2^{g-1}+1).$$ Furthermore, $R$ is disjoint
from all the remaining boundary classes of $\ss_g^+$, that is,
$R\cdot \alpha_i=R\cdot \beta_i=0$ for $1\leq i\leq [g/2]$. One
verifies that $R\cdot K_{\ss_g^+}<0$ precisely when $g\leq 7$. Since
$R$ is a covering curve for $\ss_g^{+}$ in the range $g\leq 7$, we
find  that $K_{\ss_g^+}$ is not pseudo-effective, that is, $K_{\ss_g^+}\in \overline{\mbox{Eff}}(\ss_g^+)^c$. Pseudo-effectiveness of the canonical bundle is a birational property for normal varieties, therefore the canonical bundle of any smooth model of $\ss_{g}^+$ lies outside the pseudo-effective cone as well. One can apply \cite{BDPP} Corollary 0.3, to conclude that $\ss_{g}^+$ is uniruled for $g\leq 7$.
\end{proof}

\section{The geometry of the divisor $\overline{\Theta}_{\mathrm{null}}$}

We compute the class of the divisor
$\overline{\Theta}_{\mathrm{null}}$ using test curves. The same calculation can be carried out using techniques developed in \cite{F1}, \cite{F2} to
calculate push-forwards of tautological classes from stacks of limit
linear series $\mathfrak g^r_d$ (see also Remark \ref{hurw}).

For $g\geq 9$, Harer \cite{H} has showed that  $H^2(\mathcal{S}_g^+, \mathbb Q)\cong \mathbb Q$.
The range for which this result holds has been recently improved to $g\geq 5$ in \cite{P}. In particular, it follows that $\mbox{Pic}(\ss_g^+)_{\mathbb Q}$ is generated by the classes $\lambda$, $\alpha_i, \beta_i$ for $i=0, \ldots, [g/2]$.  Thus we can expand the divisor class
$\overline{\Theta}_{\mathrm{null}}$ in terms of the generators of the Picard group
\begin{equation}\label{exp}
\overline{\Theta}_{\mathrm{null}}\equiv \bar{\lambda}\cdot
\lambda-\bar{\alpha}_0\cdot \alpha_0-\bar{\beta}_0\cdot
\beta_0-\sum_{i=1}^{[g/2]}\bigl( \bar{\alpha}_i \cdot
\alpha_i+\bar{\beta}_i \cdot \beta_i\bigr)\in
\mathrm{Pic}(\ss_g^+)_{\mathbb Q},
\end{equation} and determine the coefficients
$\bar{\lambda}, \bar{\alpha}_0, \bar{\beta}_0, \bar{\alpha}_i$ and
$\bar{\beta}_i\in \mathbb Q$ for $1\leq i\leq [g/2]$.

\begin{remark}\label{hurw} To show that the class $[\Theta_{\mathrm{null}}]\in \mbox{Pic}(\mathcal{S}_g^+)_{\mathbb Q}$ is a multiple of $\lambda$ and thus, the expansion (\ref{exp}) makes sense for all $g\geq 3$, one does not need to know that $\mbox{Pic}(\cS_g^+)_{\mathbb Q}$ is infinite cyclic. For instance, for even $g=2k-2\geq 4$, we note that, via the base point free pencil trick,  $[C, \eta]\in \Theta_{\mathrm{null}}$ if and only if the multiplication map $$\mu_C(A, \eta): H^0(C, A)\otimes H^0(C, A\otimes \eta)\rightarrow H^0(C, A^{\otimes 2}\otimes \eta)$$ is not an isomorphism for a base point free pencil $A\in W^1_k(C)$. We set $\widetilde{\cM}_g$ to be the open subvariety consisting of curves $[C]\in \cM_g$ such that $W_{k-1}^1(C)=\emptyset$ and denote by $\sigma: \mathfrak G^1_k\rightarrow \widetilde{\cM}_g$  the Hurwitz scheme of pencils $\mathfrak g^1_k$ and by $$\tau:\mathfrak G^1_k\times_{\widetilde{\cM}_g}\cS_g^+\rightarrow \cS_g^+, \ \ u:\mathfrak G^1_k\times_{\widetilde{\cM}_g} \cS_g^+\rightarrow \mathfrak G^1_k$$  the (generically finite) projections. Then $\Theta_{\mathrm{null}}=\tau_*(\mathcal{Z})$, where $$\cZ=\{[A, C, \eta]\in \mathfrak G^1_k\times_{\widetilde{\cM}_g} \cS_g^+: \mu_C(A, \eta) \mbox{ is not injective}\}.$$ Via this determinantal presentation, the class of the divisor $\cZ$ is expressible as a combination of $\tau^*(\lambda), u^*(\mathfrak a), u^*(\mathfrak b)$, where $\mathfrak{a}, \mathfrak{b}\in \mathrm{Pic}(\mathfrak G^1_k)_{\mathbb Q}$ are the tautological classes defined in e.g. \cite{FL} p.15. Since $\tau_*(u^*(\mathfrak a))=\pi^*(\sigma_*(\mathfrak a))$ (and similarly for the class $\mathfrak b$), the conclusion follows. For odd genus $g=2k-1$, one uses a similar argument replacing $\mathfrak G^1_k$ with any generically finite covering of $\cM_g$ given by a Hurwitz scheme (for instance, we take the space of pencils $\mathfrak g^1_{k+1}$ with a triple ramification point).

\end{remark}

We start the proof of Theorem \ref{thetanull} by determining the
coefficients of $\alpha_i$ and $\beta_i$ ($i\geq 1)$ in the
expansion of $[\overline{\Theta}_{\mathrm{null}}]$.

\begin{theorem}\label{di} We fix integers $g\geq 3$ and $1\leq i\leq [g/2]$.
The coefficient of $\alpha_i$ in the expansion of
$[\overline{\Theta}_{\mathrm{null}}]$ equals $0$, while the
coefficient of $\beta_i$ equals $-1/2$. That is, $\bar{\alpha}_i=0$ and
$\bar{\beta}_i=1/2$.
\end{theorem}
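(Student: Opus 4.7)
The plan is to pin down each coefficient $\bar\alpha_i$ and $\bar\beta_i$ by pairing $[\overline{\Theta}_{\mathrm{null}}]$ against a one-parameter test curve contained in the corresponding boundary divisor of $\ss_g^+$. In each case I fix a general pointed spin curve $[C,y,\eta_C]\in\cS_{i,1}^{\epsilon}$ and a general $[D,\eta_D]\in\cS_{g-i}^{\epsilon}$ of matching parity $\epsilon=\pm$, and let the attachment point $q\in D$ vary. This yields a family of quasi-stable spin curves $X_q=C\cup_{y_1}E\cup_{y_2}D$, and produces a test curve $T\cong D$ lying in $A_i$ if $\epsilon=+$, or in $B_i$ if $\epsilon=-$. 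Since $(C,\eta_C)$ and $(D,\eta_D)$ are held fixed, one finds $T\cdot\lambda=0$ and $T$ meets no boundary class other than $\alpha_i$ or $\beta_i$ in the respective cases; using $\pi^*(\delta_i)=\alpha_i+\beta_i$ together with the standard self-intersection of $\delta_i$ against the analogous test curve in $\mm_g$, one obtains $T_A\cdot\alpha_i=2-2(g-i)$ and $T_B\cdot\beta_i=2-2(g-i)$, both nonzero in our range $1\le i\le[g/2]$, $g\ge 3$.

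For $\bar\alpha_i=0$ I use the sheaf-theoretic decomposition of $H^0(X_q,\eta_q)$. A global section is a triple $(s_C,s_E,s_D)$ with $s_E\in H^0(E,\OO_E(1))$ satisfying $s_E(y_1)=s_C(y)$ and $s_E(y_2)=s_D(q)$; since the joint evaluation $H^0(E,\OO_E(1))\to\OO_E(1)|_{y_1}\oplus\OO_E(1)|_{y_2}$ is an isomorphism, $s_E$ is uniquely determined by $(s_C,s_D)$, and hence $h^0(X_q,\eta_q)=h^0(C,\eta_C)+h^0(D,\eta_D)$. In the even--even case both summands vanish generically, so $T_A\subset A_i$ is disjoint from $\overline{\Theta}_{\mathrm{null}}$ and $T_A\cdot\overline{\Theta}_{\mathrm{null}}=0$. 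Substituting into (\ref{exp}) gives $\bar\alpha_i(2-2(g-i))=0$, whence $\bar\alpha_i=0$.

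The odd--odd test curve $T_B\subset B_i$ instead satisfies $h^0(X_q,\eta_q)=1+1=2$ for every $q$, so $T_B$ is entirely contained in $\overline{\Theta}_{\mathrm{null}}$ and the previous disjointness argument fails. To compute $T_B\cdot\overline{\Theta}_{\mathrm{null}}$ as the degree of $\OO(\overline{\Theta}_{\mathrm{null}})|_{T_B}$, I realise $\OO(\overline{\Theta}_{\mathrm{null}})$ as the determinant line bundle associated to the virtual rank-zero complex $R\pi_*\eta$ on the universal spin curve $\pi:\mathcal{X}\to\ss_g^+$, and then evaluate its restriction to $T_B\cong D$ by pushing forward the normalization sequence of the family $\mathcal{X}|_{T_B}\to T_B$. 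The crucial input is that compatibility of the gluing data at the moving node $y_2\leftrightarrow q$ forces the universal theta characteristic $\eta$ to carry a twist by $\eta_D$ on the $E$-component of the total space (and a matching twist on the $C$-component), bringing $\deg\eta_D=g-i-1$ into the Chern class computation. This gives $T_B\cdot\overline{\Theta}_{\mathrm{null}}=g-i-1$, and comparing with $T_B\cdot\beta_i=2-2(g-i)$ via (\ref{exp}) yields $\bar\beta_i=\tfrac12$.

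The main obstacle is the determinantal computation in the previous paragraph: one must track the twists on each component of $\mathcal{X}|_{T_B}$ and apply Serre duality carefully for this quasi-stable family, where the map $\eta^{\otimes 2}\to\omega_{\mathcal{X}/T_B}$ fails to be an isomorphism along the exceptional component. Alternatively, as indicated in Remark \ref{hurw}, one could pull everything back to a Hurwitz cover $\mathfrak G^1_k$ and apply the degeneracy-locus techniques of \cite{F1} and \cite{F2}; the test-curve route sketched here is shorter when one is only after the boundary coefficients $\bar\alpha_i$ and $\bar\beta_i$.
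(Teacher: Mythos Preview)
Your computation of $\bar\alpha_i$ is fine and in fact more direct than the paper's: the Mayer--Vietoris identity $h^0(X_q,\eta)=h^0(C,\eta_C)+h^0(D,\eta_D)$ together with upper semicontinuity immediately forces $T_A\cap\overline\Theta_{\mathrm{null}}=\emptyset$, whereas the paper reaches the same conclusion by passing to limit linear series.

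The argument for $\bar\beta_i$ has a genuine gap. Your inference ``$h^0(X_q,\eta)=2$ for every $q$, hence $T_B\subset\overline\Theta_{\mathrm{null}}$'' is false: $\overline\Theta_{\mathrm{null}}$ is by definition the \emph{closure} of a locus of smooth spin curves, and having $h^0\ge 2$ at a boundary point does not place that point in the closure. In fact it does not: the paper analyzes the analogous curve $G_i\subset B_i$ (same construction with the moving point on the genus-$i$ component) and shows, via limit linear series, that $G_i\cap\overline\Theta_{\mathrm{null}}$ is the \emph{reduced} $0$-cycle $\mathrm{supp}(\eta_C^-)$, namely $i-1$ points where the moving attachment point hits the support of the odd theta-characteristic. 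By symmetry your $T_B$ meets $\overline\Theta_{\mathrm{null}}$ transversally in the $g-i-1$ points of $\mathrm{supp}(\eta_D^-)$, and is certainly not contained in it.

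This same observation undermines your determinantal route. The Pfaffian section of $(\det R\pi_*\eta)^{\vee}$ does vanish identically along $B_i$ (exactly because $h^0\equiv 2$ there), so its zero divisor is $\overline\Theta_{\mathrm{null}}+\sum_j m_j B_j+\cdots$ with $m_j\ge 1$; consequently the determinant line bundle is \emph{not} $\OO(\overline\Theta_{\mathrm{null}})$ over the boundary, and restricting it to $T_B$ does not compute $T_B\cdot\overline\Theta_{\mathrm{null}}$ without first determining these excess multiplicities. The paper sidesteps this by using limit $\mathfrak g^1_{g-1}$'s to identify the scheme $\overline\Theta_{\mathrm{null}}{}_{|G_i}$ directly as $\mathrm{supp}(\eta_C^-)\times\{l_D\}$, which gives both the set-theoretic intersection and its transversality in one stroke.
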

\begin{proof} For each integer $2\leq i\leq g-1$, we fix general curves $[C]\in \cM_{i}$ and $[D, q]\in
\cM_{g-i, 1}$ and consider the test curve $C^i:=\{C\cup _{y\sim q}
D\}_{y\in C}\subset \Delta_i\subset \mm_g$. We lift $C^i$ to test
curves $F_i\subset A_i$ and $G_i\subset B_i$ inside $\ss_g^+$
constructed as follows. We fix even (resp. odd)
theta-characteristics $\eta_C^+\in \mbox{Pic}^{i-1}(C)$ and
$\eta_D^+\in \mbox{Pic}^{g-i-1}(D)$ (resp. $\eta_C^-\in
\mathrm{Pic}^{i-1}(C)$ and $\eta_D^-\in \mathrm{Pic}^{g-i-1}(D)$).

If $E\cong \PP^1$ is an exceptional component, we define the family
$F_i$ (resp. $G_i$) as consisting of spin curves
$$F_i:=\bigl\{t:=[C\cup_y E\cup _q D,\  \eta_C=\eta_C^+, \eta_E=\OO_E(1), \eta_D=\eta_D^+]\in \ss_g^{+}: y\in
C\bigr\}$$ and $$G_i:=\bigl\{t:=[C\cup_y E\cup _q D,\  \eta_C=\eta_C^-,
\eta_E=\OO_E(1), \eta_D=\eta_D^-]\in \ss_g^{+}: y\in C\bigr\}.$$ Since
$\pi_*(F_i)=\pi_*(G_i)=C^i$, clearly $F_i\cdot \alpha_i=C^i\cdot
\delta_i=2-2i, F_i\cdot \beta_i=0$ and $F_i$ has intersection number
$0$ with all other generators of $\mbox{Pic}(\ss_g^+)$. Similarly
$$G_i\cdot \beta_i=2-2i,\ G_i\cdot \alpha_i=0,  \ G_i\cdot \lambda=0,$$ and $G_i$ does not
intersect the remaining boundary classes in $\ss_g^+$.

Next we determine $F_i\cap \overline{\Theta}_{\mathrm{null}}$. Assume that a
point $t\in F_i$ lies in
$\overline{\Theta}_{\mathrm{null}}$. Then there exists a family of even spin curves $(f:\mathcal{X}\rightarrow S, \eta, \beta)$, where
$S=\mbox{Spec}(R)$, with $R$ being a discrete valuation ring and $\mathcal{X}$ is a smooth surface, such that,   if $0, \xi\in S$ denote the special and the generic point of $S$ respectively and $X_{\xi}$ is the generic fibre of $f$, then $$h^0(X_{\xi}, \eta_{\xi})\geq 2, \ h^0(X_{\xi}, \eta_{\xi})\equiv 0 \mbox{ mod 2}, \  \ \eta_{\xi}^{\otimes 2}\cong \omega_{X_{\xi}}\ \mbox{ and } \bigl(f^{-1}(0), \eta_{f^{-1}(0)}\bigr)=t\in \ss_g^+.$$ Following the procedure described in \cite{EH1} p. 347-351,  this data produces  a limit
linear series $\mathfrak g^1_{g-1}$ on $C\cup D$, say $$l:=\Bigl(l_C=(L_C, V_C), l_D=(L_D, V_D)\Bigr)\in
G^1_{g-1}(C)\times G^1_{g-1}(D),$$ such that the underlying  line bundles  $L_C$ and $L_D$ respectively,  are obtained from the line bundle $(\eta_C^+, \eta_E, \eta_D^+)$ by dropping the $E$-aspect and then tensoring the line bundles $\eta_C^+$ and $\eta_D^+$ by line bundles supported at the points $y\in C$ and $q\in D$ respectively. For degree reasons, it follows that $L_C=\eta_C^+\otimes \OO_C((g-i) y)$ and $L_D=\eta_D^+\otimes \OO_D(i q)$.
Since both $C$ and $D$ are general in their respective moduli spaces, we have that $H^0(C, \eta_C^+)=0$ and $H^0(D, \eta_D^+)=0$. In particular
$a_1^{l_C}(y)\leq g-i-1$ and $a_0^{l_D}(q)<a_1^{l_D}(q)\leq i-1$, hence $a_1^{l_C}(y)+a_0^{l_D}(q)\leq g-2$, which contradicts the definition of a limit $\mathfrak g^1_{g-1}$.  Thus $F_i\cap
\overline{\Theta}_{\mathrm{null}}=\emptyset$. This implies that $\bar{\alpha_i}=0$, for all $1\leq i\leq [g/2]$ (for $i=1$, one uses instead the curve $F_{g-1}\subset A_1$ to reach the same conclusion). \vskip 4pt
 Assume that
$t\in G_i\cap \overline{\Theta}_{\mathrm{null}}$. By the same argument as above, retaining also the notation, there is an induced limit linear series on $C\cup D$, $$(l_C, l_D)\in G^1_{g-1}(C)\times G^1_{g-1}(D),$$ where  $L_C=\eta_C^{-}\otimes \OO_C((g-i)y)$ and $L_D=\eta_D^-\otimes \OO_D(iq)$. Since $[C]\in \cM_i$ and $[D, q]\in \cM_{g-i, 1}$ are both general, we may assume that $h^0(D, \eta_D^-)=h^0(C, \eta_C^-)=1$, $q\notin \mbox{supp}(\eta_D^-)$ and that $\mbox{supp}(\eta_C^-)$ consists of $i-1$ distinct points. In particular $a_1^{l_D}(q)\leq i$, hence $a_0^{l_C}(y)\geq g-1-a_1^{l_D}(q)\geq g-i-1$. Since $h^0(C, \eta_C^-)=1$, it follows that one has in fact equality, that is, $a_0^{l_C}(y)=g-i-1$ and then necessarily $a_1^{l_D}(q)=i$.

 Similarly, $a_1^{l_C}(y)\leq g-i+1$ (otherwise $\mbox{div}(\eta_C^-)\geq 2y$, that is, $\mbox{supp}(\eta_C^-)$ would be non-reduced, a contradiction), thus $a_0^{l_D}(q)\geq i-2$, and the last two inequalities must be equalities as well (one uses that  $h^0\bigl(D, L_D\otimes \OO_D(-(i-1)q)\bigr)=h^0(D, \eta_D^-\otimes \OO_D(q))=1$, that is, $a_0^{l_D}(q)<i-1$). Since $a_1^{l_C}(y)=g-i+1$, we find that $y\in \mbox{supp}(\eta_C^-)$.

To sum up, we have showed that $(l_C, l_D)$ is a refined limit $\mathfrak g^1_{g-1}$
and in fact
\begin{equation}\label{limitgi}
l_D=|\eta_D^-\otimes \OO_D(2q)|+(i-2)\cdot q\in G^1_{g-1}(D), \ \ l_C=|\eta_C^-\otimes \OO_C(y)|+(g-i-1)\cdot y\in G^1_{g-1}(C),
\end{equation}
hence  $a^{l_D}(q)=(i-2, i)$ and $a^{l_C}(y)=(g-i-1, g-i+1)$.

To prove that the intersection between $G_i$ and
$\overline{\Theta}_{\mathrm{null}}$ is transversal, we follow closely \cite{EH3} Lemma 3.4 (see especially the \emph{Remark} on p. 45): The restriction $\overline{\Theta}_{\mathrm{null} \ | G_i}$ is isomorphic, as a scheme, to the variety $\tau: \mathfrak T^1_{g-1}(G_i)\rightarrow G_i$ of limit linear series $\mathfrak g^1_{g-1}$ on the curves of compact type $\{C\cup_{y\sim q} D: y\in C\}$, whose $C$ and $D$-aspects  are obtained by twisting suitably at $y\in C$ and $q\in D$ the fixed theta-characteristics $\eta_C^-$ and $\eta_D^-$ respectively. Following the description of the scheme structure of this moduli space given in \cite{EH1} Theorem 3.3 over an arbitrary base, we find that because $G_i$ consists entirely of singular spin curves of compact type, the scheme $\mathfrak T^1_{g-1}(G_i)$ splits as a product of the corresponding moduli spaces of $C$ and $D$-aspects respectively of the limits $\mathfrak g^1_{g-1}$. By direct calculation we have showed
that $\mathfrak{T}^1_{g-1}(G_i) \cong \mathrm{supp}(\eta_C^-)\times \{l_D\}$. Since $\mathrm{supp}(\eta_C^-)$ is a reduced $0$-dimensional scheme, we obtain  that $\overline{\Theta}_{\mathrm{null} \ | G_i}$ is everywhere reduced.
It follows that $G_i\cdot
\overline{\Theta}_{\mathrm{null}}=\#\mbox{supp}(\eta_C^-)=i-1$ and
then $\bar{\beta_i}=(G_i\cdot
\overline{\Theta}_{\mathrm{null}})/(2i-2)$. This argument does not work for $i=1$, when one uses instead the intersection of $\overline{\Theta}_{\mathrm{null}}$ with $G_{g-1}$, and this  finishes the proof.
\end{proof}

Next we construct two pencils in $\ss_g^+$ which are lifts of the
standard degree $12$ pencil of elliptic tails in $\mm_g$. We fix a
general pointed curve $[C, q]\in \cM_{g-1, 1}$ and a pencil $f:
\mathrm{Bl}_9(\PP^2)\rightarrow \PP^1$ of plane cubics together with
a section $\sigma:\PP^1\rightarrow \mathrm{Bl}_9(\PP^2)$ induced by
one of the base points. We then consider the pencil
$R:=\{[C\cup_{q\sim \sigma(\lambda)} f^{-1}(\lambda)]\}_{\lambda\in
\PP^1} \subset \mm_g$.

We fix an odd theta-characteristic $\eta_C^{-}\in
\mbox{Pic}^{g-2}(C)$ such that $q\notin \mbox{supp}(\eta_C^{-})$ and
$E\cong \PP^1$ will again denote an exceptional component. We define
the family
$$F_0:=\{[C\cup_{q} E\cup_{\sigma(\lambda)} f^{-1}(\lambda),\
 \ \eta_C=\eta_C^{-},\  \eta_E=\OO_E(1),\ \
\eta_{f^{-1}(\lambda)}=\OO_{f^{-1}(\lambda)}]: \lambda\in
\PP^1\}\subset \ss_g^+.$$ Since $F_0\cap A_1=\emptyset$, we find
that $F_0\cdot \beta_1=\pi_*(F_0)\cdot \delta_1=-1$. Similarly,
$F_0\cdot \lambda=\pi_*(F_0)\cdot \lambda=1$ and obviously $F_0\cdot
\alpha_i=F_0\cdot \beta_i=0$ for $2\leq i\leq [g/2]$. For each of
the $12$ points $\lambda_{\infty}\in \PP^1$ corresponding to
singular fibres of $R$, the associated $\eta_{\lambda_{\infty}}\in
\overline{\mathrm{Pic}}^{g-1}(C\cup E \cup
f^{-1}(\lambda_{\infty}))$ are actual line bundles on $C\cup E\cup
f^{-1}({\lambda_{\infty}})$ (that is, we do not have to blow-up the
extra node). Thus we obtain that $F_0\cdot \beta_0=0$, therefore
$F_0\cdot \alpha_0=\pi_*(F_0)\cdot \delta_0=12$. \vskip 4pt

We also fix an even theta-characteristic $\eta_C^{+}\in
\mathrm{Pic}^{g-2}(C)$ and consider the degree $3$ branched covering
$\gamma: \ss_{1, 1}^+\rightarrow \mm_{1, 1}$ forgetting the spin
structure. We define the pencil
$$G_0:=\{\bigl[C\cup_q E\cup_{\sigma(\lambda)} f^{-1}(\lambda), \ \eta_C=\eta_C^{+},
\  \eta_E=\OO_E(1), \eta_{f^-1(\lambda)}\in \gamma^{-1}
[f^{-1}(\lambda)]\bigr]:\lambda\in \PP^1\}\subset \ss_g^+.$$ Since
$\pi_*(G_0)=3R$, we have that $G_0\cdot \lambda=3$. Obviously
$G_0\cdot \beta_0=G_0\cdot \beta_1=0$, hence $G_0\cdot
\alpha_1=\pi_*(G_0)\cdot \delta_1=-3$. The map $\gamma:\ss_{1,
1}^+\rightarrow \mm_{1, 1}$ is simply ramified over the point
corresponding to $j$-invariant $\infty$. Hence, $G_0\cdot
\alpha_0=12$ and $G_0\cdot \beta_0=12$, which is consistent with
formula (\ref{del0}). \vskip 4pt The last pencil we construct lies
in the boundary divisor $B_0\subset \ss_g^+$: Setting $E\cong \PP^1$
for an exceptional component, we define
$$H_0:=\{[C\cup_{\{y, q\}} E, \ \eta_C=\eta_C^+, \ \eta_E=\OO_E(1)]:
y\in C\}\subset \ss_g^+.$$ The fibre of $H_0$ over the point $y=q\in
C$ is the even spin curve $$\bigl[C\cup_q E'\cup _{q'}
E''\cup_{\{q'', y''\}} E, \ \eta_C=\eta_C^{+},
\eta_{E'}=\OO_{E'}(1), \eta_E=\OO_E(1),
\eta_{E''}=\OO_{E''}(-1)\bigr],$$ having as stable model
$[C\cup _q E_{\infty}]$, where $E_{\infty}:=E''/y''\sim
q''$ is the rational nodal curve corresponding to $j=\infty$.
Here $E', E''$ are rational curves, $E'\cap E''=\{q'\}$,
 $E\cap E''=\{q'', y''\}$ and the stabilization map for $C\cup E\cup E'\cup E''$ contracts the components $E'$ and $E$, while identifying $q''$ and
 $y''$.

We find that $H_0\cdot \lambda=0, H_0\cdot \alpha_i=H_0\cdot
\beta_i=0$ for $2\leq i\leq [g/2]$. Moreover $H_0\cdot \alpha_0=0$,
hence $H_0\cdot \beta_0=\frac{1}{2} \pi_*(H_0)\cdot \delta_0=1-g$.
Finally, $H_0\cdot \alpha_1=1$ and $H_0\cdot \beta_1=0$.

\vskip 5pt

\begin{theorem}\label{penc} If $F_0, G_0, H_0\subset \ss_g^{+}$ are the families
of spin curves defined above, then $$F_0\cdot
\overline{\Theta}_{\mathrm{null}}=G_0\cdot
\overline{\Theta}_{\mathrm{null}}=H_0 \cdot
\overline{\Theta}_{\mathrm{null}}=0.$$
\end{theorem}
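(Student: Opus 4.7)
My plan is to prove each intersection vanishes by showing that each pencil is disjoint from $\overline{\Theta}_{\mathrm{null}}$. The guiding principle is: if $[X, \eta] \in \overline{\Theta}_{\mathrm{null}}$, then $[X, \eta]$ is the limit of smooth even spin curves with $h^0(\eta_\xi) \geq 2$, so by upper semicontinuity $h^0(X, \eta) \geq 2$, and the generic pencil $V_\xi \subset H^0(X_\xi, \eta_\xi)$ specializes to a limit $\mathfrak g^1_{g-1}$ on the stable model of $X$.

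For $H_0$ and $G_0$, semicontinuity alone will suffice. A direct Mayer--Vietoris calculation shows $h^0(X, \eta) = 0$ on every fibre: a section of $\OO_E(1)$ on an exceptional $\PP^1$ is uniquely determined by its values at the two attaching nodes, so every global section of $\eta$ is forced to come from a global section of $\eta_C^+$; but $h^0(C, \eta_C^+) = 0$ for the general pointed curve $[C, q] \in \cM_{g-1, 1}$. For $G_0$ one additionally uses $h^0(F, \eta_F) = 0$, which holds because $\eta_F$ is a non-trivial even $2$-torsion on the smooth elliptic fibre $F$. The delicate bookkeeping occurs at the $12$ cuspidal elliptic fibres of $G_0$, where both preimages under $\gamma$ must be inspected---namely the stable pair $(E_\infty, \text{non-trivial 2-torsion})$ and the quasi-stable blow-up carrying an exceptional component---and at the degenerate chain-fibre $y = q$ of $H_0$; in each case the same vanishing cascades through the chain of rational components.

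For $F_0$ the situation is subtler: every fibre satisfies $h^0(X, \eta) = h^0(C, \eta_C^-) + h^0(F, \OO_F) = 1 + 1 = 2$ (both $\eta_C^-$ and $\OO_F$ being odd theta-characteristics), so semicontinuity is inconclusive. I would invoke the Eisenbud--Harris formalism on the compact-type stable model $C \cup_q F$ (smooth elliptic $F$): a hypothetical smoothing with generic fibre in $\Theta_{\mathrm{null}}$ would yield a limit $\mathfrak g^1_{g-1}$ whose $C$-aspect has underlying line bundle $L_C = \eta_C^- \otimes \OO_C(q)$. Riemann--Roch together with $h^1(L_C) = h^0(\eta_C^-(-q)) = 0$ (using $q \notin \mathrm{supp}(\eta_C^-)$) yields $h^0(L_C) = 1$, which cannot accommodate the required $2$-dimensional subspace $V_C$. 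Contradiction.

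The main obstacle is the $12$ fibres of $F_0$ over cuspidal elliptic curves, where the stable model $C \cup_q E_\infty$ is no longer of compact type and the Eisenbud--Harris formalism does not apply directly. I would handle these by analyzing the limit series on the non-compact-type stable model, e.g. by first performing a partial smoothing of the self-node of $E_\infty$ (equivalently, an explicit blow-up of the total smoothing surface, or via an Osserman-style linked-linear-series argument), which reduces the calculation to the compact-type case already settled.
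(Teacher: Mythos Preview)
Your proposal is correct and follows essentially the same approach as the paper: semicontinuity/Mayer--Vietoris for $G_0$ and $H_0$, and the limit linear series argument on the $C$-aspect for $F_0$ (the paper simply invokes the proof of its earlier theorem on $\bar\alpha_i,\bar\beta_i$ to conclude $q\in\mathrm{supp}(\eta_C^-)$, which is equivalent to your Riemann--Roch computation showing $h^0\bigl(\eta_C^-\otimes\OO_C(q)\bigr)=1$).

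One remark on your handling of the $12$ singular elliptic fibres of $F_0$ (which are \emph{nodal}, not cuspidal, in a Lefschetz pencil): the proposed fix via partial smoothing or Osserman-type linked series is more than is needed. Your own $C$-aspect argument already works uniformly at those fibres, because $C$ is separated from the elliptic tail by a \emph{disconnecting} node. The twist of $\eta$ along that separating node is well-defined regardless of whether the tail is smooth or carries a self-node, and the resulting aspect $L_C=\eta_C^-\otimes\OO_C(q)$ still has $h^0(L_C)=1$, giving the same contradiction. The paper itself does not comment on this point, so your extra care is if anything an improvement in rigour---only over-engineered in its resolution.
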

\begin{proof}
From the limit linear series argument in the proof of Theorem
\ref{di} we get that the assumption $F_0\cap
\overline{\Theta}_{\mathrm{null}}\neq \emptyset$ implies that $q\in
\mbox{supp}(\eta_C^{-})$, a contradiction. Similarly, we have that
$G_0\cap \overline{\Theta}_{\mathrm{null}}= \emptyset$ because
$[C]\in \cM_{g-1}$ can be assumed to have no even
theta-characteristics $\eta_C^{+}\in \mbox{Pic}^{g-2}(C)$ with
$h^0(C, \eta_C^{+})\geq 2$, that is $[C, \eta_C^{+}]\notin
\overline{\Theta}_{\mathrm{null}}\subset \ss_{g-1}^+$. Finally, we
assume that there exists a point $[X:=C\cup_{\{y, q\}} E,
\eta_C=\eta_C^+, \eta_E=\OO_E(1)]\in H_0\cap
\overline{\Theta}_{\mathrm{null}}$. Then certainly $h^0(X,
\eta_X)\geq 2$ and from the Mayer-Vietoris sequence on $X$ we find
that $$H^0(X, \eta_X)=\mbox{Ker}\{H^0(C, \eta_C)\oplus H^0(E,
\OO_E(1))\rightarrow \mathbb C^2_{y, q}\},$$ hence
$h^0(C,\eta_C)=h^0(X, \eta_X)\geq 2$. This contradicts the
assumption that $[C]\in \cM_{g-1}$ is general. A similar argument
works for the special point in $H_0\cap \pi^{-1}(\Delta_1)$, hence
$H_0\cdot \overline{\Theta}_{\mathrm{null}}=0$.
\end{proof}

\vskip 4pt
 \noindent {\emph{Proof of Theorem \ref{thetanull}}.}
Looking at the expansion of $[\overline{\Theta}_{\mathrm{null}}]$,
Theorem \ref{penc} gives the relations
$$F_0\cdot \overline{\Theta}_{\mathrm{null}}=\bar{\lambda}-12\bar{\alpha}_0+\bar{\beta}_1=0,\ \
G_0\cdot
\overline{\Theta}_{\mathrm{null}}=3\bar{\lambda}-12\bar{\alpha}_0-12\bar{\beta}_0+3\bar{\alpha}_1=0$$
$$\mbox{ and } \ H_0\cdot \overline{\Theta}_{\mathrm{null}}=(g-1)\bar{\beta}_0-\bar{\alpha}_1=0.$$ Since we have
already computed $\bar{\alpha}_i=0$ and $\bar{\beta}_i=1/2$ for
$1\leq i\leq [g/2]$, (cf. Theorem \ref{di}), we obtain that
$\bar{\lambda}=1/4, \bar{\alpha}_0=1/16$ and $\bar{\beta}_0=0$. This
completes the proof. \hfill $\Box$ \vskip 3pt
 A consequence of
Theorem \ref{thetanull} is a new proof of the main result from
\cite{T}:

\begin{theorem}
If $\cM_g^1$ is the locus of curves $[C]\in \cM_g$ with a vanishing
theta-null then its closure has class equal to
$$\mm_g^1\equiv
2^{g-3}\Bigl((2^g+1)\lambda-2^{g-3}\delta_0-\sum_{i=1}^{[g/2]}
(2^{g-i}-1)(2^i-1)\delta_i\Bigr)\in \mathrm{Pic}(\mm_g).$$
\end{theorem}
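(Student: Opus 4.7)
The plan is to recover $[\overline{\mathcal{M}}_g^1]$ as the push-forward $\pi_*[\overline{\Theta}_{\mathrm{null}}]$ under the forgetful map $\pi:\ss_g^+\rightarrow \mm_g$, and then simply invoke Theorem \ref{thetanull} together with the push-forward formulas for the boundary classes that were recorded in Section~1.

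First I would check that $\pi$ restricted to $\overline{\Theta}_{\mathrm{null}}$ is birational onto $\overline{\mm}_g^1$. This is a set-theoretic/enumerative statement: for a general $[C]\in \cM_g^1$ the curve $C$ carries exactly one even theta-characteristic $\eta$ with $h^0(C,\eta)\neq 0$, and that section vanishes simply (so $h^0(C,\eta)=2$ and no other even theta-characteristic is effective). This is classical — it follows from the fact that the universal theta-null is smooth and irreducible over the open part of $\cM_g^1$, and can be proved by a standard Brill--Noether style degeneration to a general $(g-1)$-nodal rational curve or a chain of elliptic curves. Granting this, the map $\pi_{|\overline{\Theta}_{\mathrm{null}}}:\overline{\Theta}_{\mathrm{null}}\rightarrow \overline{\mm}_g^1$ has degree one and hence $\pi_*[\overline{\Theta}_{\mathrm{null}}]=[\overline{\mm}_g^1]$ in $\mathrm{Pic}(\mm_g)_{\mathbb Q}$.

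Next I would plug into the expansion of Theorem \ref{thetanull} the formulas
$$\pi_*(\lambda)=2^{g-1}(2^g+1)\lambda,\quad \pi_*(\alpha_0)=2^{2g-2}\delta_0,\quad \pi_*(\beta_i)=2^{g-2}(2^i-1)(2^{g-i}-1)\delta_i$$
recorded in Section~1 (noting that the coefficients of $\beta_0$ and of $\alpha_i$ for $i\geq 1$ in $[\overline{\Theta}_{\mathrm{null}}]$ vanish, so the corresponding push-forwards are irrelevant). This gives
\begin{equation*}
\pi_*[\overline{\Theta}_{\mathrm{null}}]=\frac{2^{g-1}(2^g+1)}{4}\lambda-\frac{2^{2g-2}}{16}\delta_0-\sum_{i=1}^{[g/2]}\frac{2^{g-2}(2^i-1)(2^{g-i}-1)}{2}\delta_i,
\end{equation*}
and factoring $2^{g-3}$ out of each term reproduces the stated formula for $[\overline{\mm}_g^1]$. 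Crucially, the fact that $\bar{\beta}_0=0$ and $\bar{\alpha}_i=0$ is what makes this push-forward a clean positive combination of $\lambda,\delta_0,\delta_i$, with the coefficients matching Teixidor's formula on the nose.

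The arithmetic is routine; the only genuine point requiring work is the birationality statement $\deg(\pi_{|\overline{\Theta}_{\mathrm{null}}})=1$, i.e., uniqueness (and reducedness) of the vanishing theta-null on a generic member of $\cM_g^1$. I would treat this as the main obstacle, and, if one wants to bypass it entirely, observe that a priori one gets $\pi_*[\overline{\Theta}_{\mathrm{null}}]=m\cdot[\overline{\mm}_g^1]$ for some positive integer $m$, and then comparing the $\lambda$-coefficient with the known value $2^{g-3}(2^g+1)$ from Teixidor's calculation (or from any single test curve computation in $\mm_g$ transverse to $\overline{\mm}_g^1$) forces $m=1$ and simultaneously yields the full formula.
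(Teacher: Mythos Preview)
Your proposal is correct and follows essentially the same route as the paper: push forward the class of $\overline{\Theta}_{\mathrm{null}}$ from Theorem~\ref{thetanull} using the formulas $\pi_*(\lambda)$, $\pi_*(\alpha_0)$, $\pi_*(\beta_i)$ recorded in Section~1, and simplify. The paper is terser---it simply asserts the scheme-theoretic equality $\pi_*(\overline{\Theta}_{\mathrm{null}})=\mm_g^1$ without further comment---whereas you spell out why the restriction of $\pi$ has degree one; note, however, that your fallback of reading off $m$ from Teixidor's $\lambda$-coefficient would undercut the point of the theorem, which is advertised as a \emph{new} proof of \cite{T}.
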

\begin{proof} We use the scheme-theoretic equality
$\pi_*(\overline{\Theta}_{\mathrm{null}})=\mm_{g}^1$ as well as the
formulas $\pi_*(\lambda)=2^{g-1}(2^g+1)\lambda,\
\pi_*(\alpha_0)=2^{2g-2}\delta_0, \
\pi_*(\beta_0)=2^{g-2}(2^{g-1}+1)\delta_0,\ \
\pi_*(\alpha_i)=2^{g-2}(2^i+1)(2^{g-i}+1)\delta_i$ and
$\pi_*(\beta_i)=2^{g-2}(2^i-1)(2^{g-i}-1)\delta_i$ valid for $1\leq
i\leq [g/2]$.
\end{proof}

\end{document}